\long\def\symbolfootnote[#1]#2{\begingroup%
\def\thefootnote{\fnsymbol{footnote}}\footnote[#1]{#2}\endgroup}
\def\imod#1{\allowbreak\mkern10mu({\operator@font mod}\,\,#1)}
\newcommand{\qq}{\mathbb Q}
\newcommand{\nat}{\mathbb{N}} 
\newcommand{\zz}{\mathbb Z}
\newcommand{\mg}[1]{{#1}^{\times}}
\newcommand{\sq}[1]{{#1}^{\times 2}}
\newcommand{\ovl}{\overline}
\newcommand{\car}{\mathrm{char}}
\newcommand{\trdeg}{\mathrm{tr.deg}}
\newtheorem{theorem}{Theorem}[section]
\newtheorem{lemma}[theorem]{Lemma}
\newtheorem{proposition}[theorem]{Proposition}
\newtheorem{question}[theorem]{Question}
\newtheorem*{theorem*}{Theorem}
\theoremstyle{definition}
\newtheorem{remark}[theorem]{Remark}
\newtheorem{example}[theorem]{Example}
\numberwithin{equation}{section}
\newcommand{\ignore}[1]{}
\newcommand{\mynote}[1]{}
\begin{document}
\setcounter{section}{0}
\title{Splitting of differential Quaternion algebras}
\author{Parul Gupta}
\email{parulgupta1211@gmail.com}
\author{Yashpreet Kaur}
\email{yashpreetkm@gmail.com }
\author{Anupam Singh}
\email{anupamk18@gmail.com}
\address{IISER Pune, Dr. Homi Bhabha Road, Pashan, Pune 411 008, India}
\thanks{The second named author is supported by NBHM (DAE, Govt. of India): 0204/16(33)/2020/R$\&$D-II/26. The third named author is funded by an NBHM research project grant and partly by SERB CRG/2019/000271 for this research.}
\subjclass[2010]{12H05, 16H05, 16W25}
\today
\keywords{Derivations, Differential quaternion algebras,  Differential splitting fields, Rational 
function fields}


\begin{abstract}
We study differential splitting fields of quaternion algebras with derivations. A quaternion algebra over a field $k$ is always split by a quadratic extension of $k$. However, a differential quaternion algebra need not be split over any algebraic extension of $k$. We use solutions of certain Riccati equations to provide bounds on the transcendence degree of splitting fields of a differential quaternion algebra.  
\end{abstract}

\maketitle

\section{Introduction}
In this paper, we study differential splitting fields of quaternion algebras with derivations. Quaternion algebras are 4-dimensional central simple algebras. Derivations on central simple algebras are studied by Hochschild \cite{Ho}, Hoechsmann\cite{Hoe}, Juan and Magid \cite{LJARM}, Kulshrestha and Srinivasan \cite{AKVRS}, Gupta, Kaur and Singh \cite{GKS}.

A notion of splitting fields for a differential central simple algebra was introduced in \cite{LJARM}. Authors also proved the existence of a finitely generated differential field extensions that split the given differential algebra.
It is clear from  \cite[Proposition 2]{LJARM} that every $m^2$-dimensional differential central simple algebra is split by a differential field extension of transcendence degree at most $m^2$. In particular, every differential quaternion algebra is split by a differential field extension of transcendence degree at most $4$ (also see \cite[Theorem 4.1]{AKVRS}).  This raises a question:
\begin{question}
 What is the optimal bound  on transcendence degree of a splitting field?
\end{question}

We answer the above question for quaternion algebras over a field $k$ with $\car(k)\neq2$. In \Cref{splitequivalence}, we provide a construction of differential splitting fields of a  differential quaternion algebra using differential fields containing solutions of a certain Riccati equation.  We then use this proposition to show the transcendence degree of such a splitting field is at most $3$ (see \Cref{splitequivalencec}). Moreover, for $0\leq r\leq 3$, we give examples of  differential quaternion algebras whose splitting fields have transcendence degree at least $r$ over $k$ (see \Cref{transdegthree} and \Cref{transdegone}). Thus, the bound $3$ is optimal.
In \Cref{finitefields}, we consider a particular class of differential quaternion algebras that are split over a transcendence degree at most one extension of $k$.

Let $k$ be a field containing a primitive $m$th root of unity, say $\omega$.
An $m^2$-dimensional central simple algebra over $k$ is a symbol algebra if it is generated by two elements $u,v$ such that $u^m, v^m \in \mg k$ and $vu = \omega uv$.  
For generators $u,v$ of a symbol algebra $A$, there is a derivation on $A$ that restricts to a given derivation on $k$ such that  $d(k(u))\subseteq k(u)$ and $d(k(v))\subseteq k(v)$; such a derivation is named as standard derivation.
It was shown in \cite[Theorem 5.4]{GKS} that a $m^2$-dimensional symbol algebra with standard derivation is split by a finite field extension of degree at most $m^2$ if $m$ is odd and $2m^2$ if $m$ is even. 
It is natural to ask the following questions.

\begin{question} \label{standardquestion}
	\begin{enumerate}[(i)]
		\item Can one classify all differential symbol algebras over a differential field that are split by finite field extensions?
		\item  Can one classify  all standard derivations on a given symbol algebra?
	\end{enumerate}
\end{question}

If $\car(k) \neq 2$ then any quaternion algebra is a symbol algebra. Thus, in particular, a quaternion algebra with standard derivation is split by a finite field extension of degree at most $8$ (also see \cite[Theorem 4.1]{AKVRS}). 
We show that its converse is not true (see \Cref{examplealgebraic})
In the case of quaternion algebras, we show that the above questions are inter-related.
 In \Cref{standardequivalance}, we prove that  a differential quaternion algebra is split by a finite differential field extension if and only if the derivation on the differential split algebra is standard. We hope to extend these results to an arbitrary symbol algebra in future.

\subsection*{Notation} Throughout this paper we fix  a field $k$ with $\car(k)\neq2$, an algebraic closure  $\ovl{k}$ of $k$.
Let $\mg k$ denote the multiplicative group of $k$.
For  $\alpha, \beta \in \mg k$, we  fix the quaternion algebra $Q = (\alpha, \beta)_{k}$ with generators $u,v$ such that $u^2 =\alpha, v^2 =\beta$ and $vu = - uv$.

\section{Differential splitting of central simple algebras}\label{dqa}

A ring (resp. algebra or field) $R$ together with an additive map $d\colon R \rightarrow R$ satisfying  $d(xy)=xd(y) + d(x)y$ for all $x, y \in R$, is called differential ring (resp. algebra or field) and is denoted by $(R, d)$. The map $d$ on $R$ is called a \textbf{derivation on $R$}. The set $C_{(R,d)} = \{ x\in R \mid d(x) =0\}$ is called the set of \textbf{constants of $(R,d)$}.

For any element $\vartheta \in R$, 
there is an \textbf{inner derivation} $\partial_\vartheta $ on $R$ given by $\partial_\vartheta (x)= x\vartheta -\vartheta  x$ for $x\in R$. Note that  $C_{(R,\partial_\vartheta)}$ is equal to the centralizer of $\vartheta$ in $R$.

A {\bf central simple algebra over $k$} is a finite dimensional $k$-algebra with center $k$ and no non-trivial two-sided ideals. 
 A central simple $k$-algebra $A$ \textbf{splits over a field extension $L/k$} if $A\otimes_k L\simeq M_m(L)$ as $L$-algebras and $L$ is called a \textbf{splitting field} of $A$. 

Let $(k,\,')$ be a differential field. For a field extension $L/k$, there is a derivation on $L$ that restricts to $'$ on $k$ (see \cite[Section 3]{Ros}). Moreover, when $L/k$ is a finite field extension, there is a unique extension of $'$ on $L$.  For convenience, we denote the derivations on field extensions by $'$ whenever the derivation is clear from the context. 

A central simple $k$-algebra $A$ together with a derivation $d$ that restricts to $'$ on k is called differential central simple algebra over $k$, and is denoted by $(A,d)$. The set of all derivations on $A$ that are extensions of $'$ is denoted by $Der(A/(k,\,'))$.

We say that $(A_1,d_1)$ and $(A_2,d_2)$ are isomorphic as differential $k$-algebras if there exists a  $k$-algebra isomorphism $\phi: (A_1,d_1) \rightarrow (A_2,d_2)$ such that $\phi \circ d_1 = d_2 \circ \phi$; and $\phi$ is called {\bf differential isomorphism}.

	The derivation $'$ on the matrix algebra  $M_m(k)$ defined by $(a_{ij})' = (a'_{ij})$ is 	called the \textbf{coordinate-wise derivation on $M_m(k)$}. For the matrix algebra $M_m(k)$, we have $ Der(M_m(k)/(k, \,')) = \{d_P =\,' + \partial_P \mid\ P\in M_m(k)\}$ (see \cite[Theorem 2]{Ami}). 	
	
	In \cite{LJARM}, the authors introduced the notion of differential splitting fields.
	 A differential central simple algebra $(A,d)$ over $(k,\, ')$ \textbf{splits over a differential field $(L,\, ')\supseteq (k,\, ')$} if $(A\otimes_k L, d^{\ast} ) \simeq (M_m(L),\, ')$ as differential $L$-algebras, where $d^{\ast} := d\otimes \, '=d\otimes id_L + id_A\otimes\,'$. In \cite{LJARM}, the authors proved the existence of differential splitting fields using the theory of Picard-Vessiot extensions. The following characterization of isomorphic differential matrix algebras was crucial in constructing differential splitting fields.

\begin{proposition}\cite[Proposition 2]{LJARM}\label{JM_split}
	Let $P\in M_m(k) \setminus \{0\}$.  Then $(M_m(k), d_P)$ and $(M_m(k), \,')$ are  isomorphic as differential $k$-algebras if and only if there exists $F \in GL_m(k)$ such that $F' =PF$. 
\end{proposition}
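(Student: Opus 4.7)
The plan is to use Skolem--Noether: every $k$-algebra automorphism of $M_m(k)$ is inner, so any candidate $k$-algebra isomorphism between $(M_m(k),d_P)$ and $(M_m(k),\,')$ has the form $\phi(X)=F^{-1}XF$ for some $F\in GL_m(k)$. The proposition then amounts to translating the differential compatibility $\phi\circ d_P=\,'\circ\phi$ into a first-order linear matrix equation for $F$.

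For the ``if'' direction, I would start from $F\in GL_m(k)$ with $F'=PF$, set $\phi(X)=F^{-1}XF$, and verify directly that $\phi$ intertwines the two derivations. The Leibniz rule gives
\[(\phi(X))'=(F^{-1})'XF+F^{-1}X'F+F^{-1}XF',\]
and the identity $(F^{-1})'=-F^{-1}F'F^{-1}=-F^{-1}P$, together with $F'=PF$, collapses the right-hand side to $F^{-1}(X'+XP-PX)F=\phi(d_P(X))$, as required.

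For the ``only if'' direction, starting from a differential isomorphism $\phi$ I invoke Skolem--Noether to write $\phi(X)=F^{-1}XF$. Substituting into $\phi(d_P(X))=(\phi(X))'$, conjugating by $F$, and canceling the common $X'$ terms yields the commutator identity
\[XP-PX=XR-RX \quad\text{for all } X\in M_m(k), \text{ where } R:=F'F^{-1}.\]
Since $Z(M_m(k))=k\cdot I$, this forces $P-R=\lambda I$ for some $\lambda\in k$, so $F'=(P-\lambda I)F$.

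The main point requiring care is matching $F'=(P-\lambda I)F$ to the clean statement $F'=PF$. The resolution is that $\partial_Q$ depends on $Q$ only modulo $k\cdot I$, so $d_{P-\lambda I}=d_P$ and one may take $P-\lambda I$ as the defining matrix of the derivation; equivalently, rescaling $F$ by any $g\in \mg k$ with $g'/g=\lambda$ transforms the equation into $(gF)'=P(gF)$. I expect this central-scalar gauge ambiguity, rather than any intricacy of the matrix algebra itself, to be the only subtle step in the argument.
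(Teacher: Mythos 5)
Your ``if'' direction is fine: Skolem--Noether plus the intertwining computation is the natural route (the paper itself gives no proof, it only cites \cite{LJARM}, so there is nothing in-text to compare against), and with the paper's convention $\partial_P(X)=XP-PX$ your verification that $X\mapsto F^{-1}XF$ carries $d_P$ to $'$ is correct. The genuine problem is the last step of your ``only if'' direction --- exactly the step you flag as the only subtle one. Skolem--Noether and the centre argument give $F'=(P-\lambda I)F$ for some $\lambda\in k$, and neither of your proposed resolutions yields the asserted $F'=PF$: observing that $d_{P-\lambda I}=d_P$ proves the equation for \emph{some} matrix representing the derivation, not for the given $P$, while the rescaling argument presupposes an element $g\in k^{\times}$ with $g'/g=\lambda$, and such a $g$ need not exist, since not every element of $k$ is a logarithmic derivative (in $(\qq(t),\tfrac{d}{dt})$ no $g\in\qq(t)^{\times}$ satisfies $g'/g=t$, because logarithmic derivatives of rational functions have only simple poles and vanish at infinity).

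Moreover, this is not a defect a cleverer argument could repair: the ``only if'' implication is false as literally stated. Take $k=\qq(t)$, $t'=1$, $P=tI_2\neq 0$; then $\partial_P=0$, so $(M_2(k),d_P)=(M_2(k),\,')$, yet $F'=PF$ would force every entry of $F$ to satisfy $f'=tf$, which has only the zero solution in $\qq(t)$. Even a trace-zero example exists: for $F=\diag(t,1)$ and $P=\diag\!\left(\tfrac{1}{2t},-\tfrac{1}{2t}\right)=F'F^{-1}-\tfrac{1}{2t}I_2$ one has $d_P=d_{F'F^{-1}}$, hence $(M_2(k),d_P)\simeq(M_2(k),\,')$ by your ``if'' direction, while $G'=PG$ has no solution in $GL_2(\qq(t))$ because $y'=\tfrac{1}{2t}y$ has no nonzero rational solution. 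So what your argument actually establishes is: the two differential algebras are isomorphic if and only if there exist $F\in GL_m(k)$ and $\lambda\in k$ with $F'=(P-\lambda I)F$, i.e.\ the statement holds only up to a central-scalar twist. That weaker form is what the applications in the paper really use: shifting $P$ by a scalar matrix changes neither the derivation nor the Riccati equation (\ref{nonlineareq}) extracted from the system $F'=PF$ in \Cref{splitequivalence}(\ref{splitequivalencea}), and the construction in part (\ref{splitequivalenceb}) only invokes the (correct) ``if'' direction. You should either restate the proposition with the scalar $\lambda$, or add a hypothesis guaranteeing that $\lambda$ is a logarithmic derivative in $k$; as written, the gap is real.
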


For a quaternion algebra $Q$ over $k$,
 a derivation $d\in Der(Q/(k,\,'))$ is called a {\bf standard derivation} if there exists $u,v\in Q^0$ such that $u^2, v^2 \in \mg k,\  v u=- u v,\ d(u)\in k(u)$ and $d(v)\in k(v)$, and we denote this derivation by $d_{(u,v)}$. 
In \cite[Proposition 3.2]{AKVRS}, it was shown that for every derivation  $d\in Der(Q/(k,\, '))$, there exists  a unique $\vartheta\in Q^0$ such that $d = d_{(u,v)}+\partial_\vartheta$. Furthermore, this characterization was used to construct splitting fields of quaternion algebras with derivations. 
We recall results related to differential splitting of quaternion algebras.

\begin{theorem}\label{Quatex}
Let $(k,\, ')$ be a differential field  with $\car(k)=0$. 
Let $Q =(\alpha, \beta)_k$ be a quaternion algebra.

\begin{enumerate}

\item \cite[Theorem 4.1]{AKVRS} \label{Quatex2} The differential algebra $(Q,d)$ is split by a finitely 
generated differential field extension  of $(k,\, ')$. The algebra $(Q,d_{(u,v)})$ is split by a finite extension of $(k,\,')$.

\item \cite[Theorem 4.5]{AKVRS} \label{Quatex3} If $\,'$ is the zero derivation and $Q$ is a division algebra with non-zero derivation $d$, then $(Q,d)$ is split by a field extension of transcendence degree $1$ and is not split by any algebraic extension of $k$.   
\end{enumerate}
\end{theorem}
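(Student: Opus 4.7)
The plan is to treat the two parts of Theorem \ref{Quatex} separately, with Proposition \ref{JM_split} as the bridge in each case: it converts differential splitting over a differential field $L$ into solvability of the matrix ODE $F' = PF$ in $GL_m(L)$.

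For part (1), I first algebraically split $Q$ over the finite extension $L' = k(\sqrt{\alpha})$, identifying $(Q\otimes_k L',\,d^*)$ with $(M_2(L'),\,d_P)$ for some $P\in M_2(L')$. A Picard--Vessiot extension of $L'$ for the equation $F' = PF$ is then a finitely generated differential field over $L'$, hence over $k$, and splits $(Q,d)$ by Proposition \ref{JM_split}. For the refinement with a standard derivation $d_{(u,v)}$, differentiating $u^2=\alpha$ and $v^2=\beta$ together with the conditions $d(u)\in k(u)$, $d(v)\in k(v)$ forces
\[ d(u) = \tfrac{\alpha'}{2\alpha}\,u, \qquad d(v) = \tfrac{\beta'}{2\beta}\,v. \]
Using the explicit embedding $u \mapsto \sqrt{\alpha}\,\diag(1,-1)$, $v \mapsto \left(\begin{smallmatrix}0&\beta\\1&0\end{smallmatrix}\right)$ of $Q\otimes_k L'$ in $M_2(L')$, a short calculation shows that the inner part of the pulled back derivation is $P = \diag(\beta'/(2\beta),\,0)$; the contribution from $u$ is absorbed by the coordinate derivative of $\sqrt{\alpha}$. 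The matrix $F = \diag(\sqrt{\beta},\,1)$ then solves $F' = PF$ in $GL_2(k(\sqrt{\alpha},\sqrt{\beta}))$, giving a finite splitting field.

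For part (2), I first address the negative statement. In characteristic zero every algebraic extension $L/k$ is separable, so the zero derivation on $k$ admits a unique, and zero, extension to $L$: differentiating any minimal polynomial relation $f(\theta)=0$ yields $f'(\theta)\,d(\theta)=0$ with $f'(\theta)\neq 0$. If $(Q\otimes_k L,\,d^*) \simeq (M_2(L),\,')$ were a differential isomorphism, the right hand side would carry the zero derivation, forcing $d^*$, hence its restriction $d$ to $Q\otimes 1$, to vanish — contradicting $d\neq 0$.

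For the positive assertion, the computation above shows that a standard derivation on $Q$ restricting to zero on $k$ is itself zero. Using the decomposition $d = d_{(u,v)} + \partial_\vartheta$ from \cite[Proposition 3.2]{AKVRS}, the non-zero derivation $d$ must then be the inner derivation $\partial_\vartheta$ for some $\vartheta \in Q^0\setminus\{0\}$, and since $Q$ is a division algebra, $N := \vartheta^2 \in \mg{k}$. Setting $L_0 = k(\vartheta) = k(\sqrt{N})$, which algebraically splits $Q$, the element $\vartheta$ corresponds to $A = \diag(\sqrt{N},\,-\sqrt{N})$ and $d^*$ becomes the inner derivation $\partial_A$ on $M_2(L_0)$. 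By Proposition \ref{JM_split}, it suffices to solve $F' = AF$; the explicit fundamental matrix $F = \diag(a,\,1/a)$ works, where $a$ satisfies $a' = \sqrt{N}\,a$. Since $L_0$ carries the zero derivation and $\sqrt{N}\neq 0$, any nonzero such $a$ is transcendental over $L_0$, so $L = L_0(a)$ is a differential splitting field of transcendence degree exactly one. The main technical obstacle is the bookkeeping needed to transport the derivation on $Q$ to an explicit inner derivation on its split form $M_2$; once $P$ (or $A$) is identified, the resulting linear ODEs are of exponential type and transparent to solve.
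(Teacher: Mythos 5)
Your argument is correct, but it is worth noting that the paper does not prove \Cref{Quatex} at all: both parts are quoted from \cite[Theorems 4.1 and 4.5]{AKVRS}, so what you have written is an independent reconstruction rather than a parallel of an in-text proof. Your route is essentially the same machinery the paper develops later in Section 4: split over $k(\sqrt{\alpha})$, transport $d$ to a derivation $d_P$ on $M_2$, and apply \Cref{JM_split} to reduce splitting to solving $F'=PF$; your part (2) moreover gives a clean self-contained treatment (uniqueness of the zero derivation on algebraic extensions for the negative statement, and $d=\partial_\vartheta$ with $F=\mathrm{diag}(a,a^{-1})$, $a'=\sqrt{N}a$ transcendental, for the positive one), which is in the spirit of the cited Theorem 4.5. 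Two small points deserve care. First, your matrix $P=\mathrm{diag}\bigl(\tfrac{\beta'}{2\beta},0\bigr)$ is not the trace-zero representative the paper uses in (\ref{matrixp}); the two differ by the scalar $\tfrac{\beta'}{4\beta}I$, which leaves the derivation $d_P$ unchanged but does change the equation $F'=PF$. The direction of \Cref{JM_split} you invoke (existence of $F$ implies triviality, via conjugation by $F$) is valid for any representative, so your conclusion stands and in fact yields the finite splitting field $k(\sqrt{\alpha},\sqrt{\beta})$, whereas the trace-zero normalization leads to $k(\sqrt{\alpha},\beta^{1/4})$ and the degree-$8$ bound of \cite{AKVRS}; but you should say explicitly that you are using only the ``if'' direction, since the ``only if'' direction is sensitive to the choice of $P$ modulo scalars. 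Second, the appeal to a Picard--Vessiot extension in part (1) classically presupposes algebraically closed constants, which is not assumed here; the statement only needs a finitely generated differential extension containing a fundamental matrix, which one gets directly by passing to the fraction field of $L'[X_{ij},\det^{-1}]$ (with $X'=PX$) modulo a maximal differential ideal, prime in characteristic zero. With these adjustments your proof is complete and matches the strength of the cited results.
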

 
We are now interested in constructing differential splitting fields for an arbitrary differential quaternion algebra $(Q,d)$.

\section{Solutions of Riccati equations}{}
In the process of finding the splitting fields of a differential quaternion algebra, we come across a certain types of first order differential equations with variable coefficients. 
In this section, we will note some essential results concerning  homogeneous linear differential equations and Riccati equations.

A first order differential equation over $k$ of the form
$$X' = \alpha_0 + \alpha_1 X+\alpha_2 X^2,$$
where $\alpha_0,\alpha_2\neq0$, is called {\bf Riccati equation}. If  $\alpha_0=\alpha_2=0$ then equation becomes a  homogenous linear differential equation. In general, solutions of a Riccati equation are not known, but there are methods to solve particular cases. We will use Riccati equations whose solutions are known to construct examples of  differential quaternion algebras that split over fields with fixed transcendence degrees. For this purpose, in the following results, we relate solutions of Riccati equations to that of a certain linear differential equation.

\begin{proposition}\label{constantalg}Let $a\in k$ and let $\theta\in \ovl k$ be a solution of  $Y'=aY$ such that  $n:=[k[\theta]:k]$ is minimal. Then $\theta^n\in k$. 
\end{proposition}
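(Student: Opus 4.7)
The plan is to differentiate the minimal polynomial of $\theta$ to extract coefficient relations, reduce the claim to a statement about constants in $k[\theta]$, and then use the minimality of $n$ to pin those constants down to $k$. Write $f(X) = X^n + c_{n-1}X^{n-1} + \cdots + c_0 \in k[X]$ for the minimal polynomial of $\theta$. Since $\theta' = a\theta$ yields $(\theta^i)' = ia\theta^i$ inductively, applying the derivation to $f(\theta) = 0$ produces $\sum_{i=0}^n (c_i' + ia c_i)\theta^i = 0$. The degenerate case $a = 0$ is immediate, since $1 \in k$ then solves $Y' = 0 \cdot Y$ and minimality forces $n = 1$; so assume $a \ne 0$. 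Working in characteristic zero, the leading coefficient $na$ is nonzero, and minimality of $f$ forces the displayed polynomial to equal $na \cdot f(X)$. Comparing coefficients yields $c_i' = (n-i) a c_i$ for $0 \le i \le n-1$.

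Taking $i = 0$ gives $c_0' = na c_0$, and combined with $(\theta^n)' = na\theta^n$ this shows $\mu := \theta^n / c_0$ is a constant of $k[\theta]$ (note $c_0 \ne 0$, for otherwise $X \mid f(X)$ and $\theta = 0$). The problem therefore reduces to proving $\mu \in k$, or equivalently $C_{k[\theta]} \subseteq k$.

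This last reduction is the crux, and where the minimality of $n$ is essential. Given any $\alpha = \sum_{i=0}^{n-1} a_i \theta^i \in C_{k[\theta]}$, expanding $\alpha' = 0$ in the $k$-basis $1, \theta, \ldots, \theta^{n-1}$ forces $a_i' = -ia\, a_i$ for each $i$. If some $a_i$ with $1 \le i \le n-1$ were nonzero, I would choose $\beta \in \ovl k$ with $\beta^i = 1/a_i$; differentiating this identity and substituting $a_i' = -ia\, a_i$ yields $\beta' = a\beta$, so $\beta$ is a nonzero solution of $Y' = aY$ with $[k[\beta]:k] \le i < n$, contradicting the minimality of $n$. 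Hence $a_i = 0$ for all $i \ge 1$, so $\alpha = a_0 \in C_k \subseteq k$. Applied to $\mu$, this gives $\mu \in k$, and therefore $\theta^n = \mu c_0 \in k$, as required.
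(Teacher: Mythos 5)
Your proof is correct and uses essentially the same mechanism as the paper: differentiate the minimal polynomial, deduce that it must equal $na$ times itself up to the identification forced by minimality of degree, obtain the coefficient relations $c_i' = (n-i)ac_i$, and exploit minimality of $n$ by extracting roots of coefficients to produce lower-degree solutions of $Y'=aY$. The only difference is organizational: the paper applies the root-extraction argument directly to kill $c_1,\dots,c_{n-1}$, so that $p(X)=X^n+c_0$, whereas you apply the same trick to show the constants of $k[\theta]$ lie in $k$ and then conclude via the constant $\theta^n/c_0$ — a slightly longer route to the same end.
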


\begin{proof}
	Consider the polynomial ring $k[X]$ with derivation $D$ such that $D(X)=aX$.
	Let  $p(X)=X^n+a_{n-1}X^{n-1}+\dots+a_1X+a_0$ be the minimal polynomial of $\theta$ over $k$. 
	Consider the differential homomorphism $\phi:k[X]\to k[\theta]$ defined by $\phi(X)=\theta$. Then the kernel of $\phi$ is the prime ideal generated by $p(X)$. Since $D(p)(\theta)=0$, $p$ divides $D(p)$ and whereby $D(p)=nap$. On comparing the coefficients, we obtain $D(a_i)=(n-i)aa_i$ for  $0\leq i\leq n-1$. This implies $\sqrt[n-i]{a_i}$ is a solution of $Y'=aY$. Since $n$ is the minimal degree for such a solution, we obtain $a_i=0$ for $1\leq i\leq n-1$. Therefore, $p(X)=X^n+a_0$.	
\end{proof}

\begin{lemma}\label{zeropole}
Let $k(t)$ be the rational function field with derivation $t' = \alpha_0 + \alpha_1 t+\alpha_2 t^2$ where $\alpha_0, \alpha_1,\alpha_2 \in k$ and $\alpha_2\neq 0$.
Let $n\in \nat$ and $f \in k(t)$ be a solution of the differential equation $Y' = n\left( \frac{\alpha_1}{2} +\alpha_2 t\right)Y$.
Then each zero and pole of $f$ is a solution of $X' = \alpha_0 + \alpha_1 X+\alpha_2 X^2$.
\end{lemma}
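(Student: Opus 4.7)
The plan is to pass to $\ovl{k}(t)$, factor $f$ completely, take the logarithmic derivative, and extract the Riccati equation at each zero and pole by reading off residues from the partial-fraction decomposition.

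First, I would extend $'$ uniquely from $k$ to $\ovl{k}$ (this is automatic for algebraic extensions) and hence to $\ovl{k}(t)$ via the same rule $t' = \alpha_0 + \alpha_1 t + \alpha_2 t^2$. Writing $f \in k(t) \subseteq \ovl{k}(t)$ in factored form as
\[
f = c \prod_i (t - \beta_i)^{m_i},
\]
where $c \in \mg{\ovl{k}}$, the $\beta_i \in \ovl{k}$ are the distinct zeros and poles of $f$, and $m_i \in \zz \setminus \{0\}$ are their orders (positive for zeros, negative for poles), the logarithmic derivative becomes
\[
\frac{f'}{f} = \frac{c'}{c} + \sum_i m_i \frac{t' - \beta_i'}{t - \beta_i} = \frac{c'}{c} + \sum_i m_i \frac{\alpha_2 t^2 + \alpha_1 t + \alpha_0 - \beta_i'}{t - \beta_i}.
\]

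Next, I would perform the polynomial division
\[
\alpha_2 t^2 + \alpha_1 t + \alpha_0 - \beta_i' = (t - \beta_i)\bigl(\alpha_2 t + \alpha_1 + \alpha_2 \beta_i\bigr) + \bigl(\alpha_0 + \alpha_1 \beta_i + \alpha_2 \beta_i^2 - \beta_i'\bigr),
\]
which rewrites each summand as a polynomial in $t$ plus a simple pole at $\beta_i$ with residue $m_i(\alpha_0 + \alpha_1 \beta_i + \alpha_2 \beta_i^2 - \beta_i')$. Substituting the hypothesis $f'/f = n(\alpha_1/2 + \alpha_2 t)$ then produces an identity in $\ovl{k}(t)$ whose left-hand side has the simple-pole terms just described, while the right-hand side is a polynomial with no poles at all.

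Finally, by uniqueness of the partial-fraction decomposition over $\ovl{k}(t)$, every such residue must vanish, and since $m_i \neq 0$ I conclude $\beta_i' = \alpha_0 + \alpha_1 \beta_i + \alpha_2 \beta_i^2$, which is exactly the Riccati equation evaluated at $\beta_i$. The argument is essentially computational rather than conceptual; the only point that needs a moment's care is the extension of $'$ from $k$ to $\ovl{k}$, but this is automatic for algebraic extensions, so I do not anticipate any real obstacle.
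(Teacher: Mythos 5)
Your proof is correct and follows essentially the same route as the paper: factor $f$ over $\ovl{k}$, compare $f'/f$ (equivalently $f'=npf$) with the factored form, and conclude at each zero or pole $\beta_i$ that $\beta_i'=\alpha_0+\alpha_1\beta_i+\alpha_2\beta_i^2$. The paper phrases the final step as $(t-\beta_i)$ dividing $t'-\beta_i'$ rather than as vanishing residues in the partial-fraction decomposition, but this is only a cosmetic difference.
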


\begin{proof}
Let $p =   \frac{\alpha_1}{2} +\alpha_2 t$. 
Since $p \notin k$, the solution $f \notin k$.
Let $\gamma_1, \ldots, \gamma_r \in \ovl{k}$, $\eta \in k$ and $m_1, \ldots, m_r \in \zz$ be such that $f = \eta\prod_{i=1}^r (t - \gamma_i)^{m_i} $.
Then 
$$f' =\eta \prod_{i=1}^r (t - \gamma_i)^{m_i-1} \cdot \left(\sum_{i=1}^{r} \Big((t'-\gamma_i')\prod_{j\neq i} (t - \gamma_j)\Big)\right) +\frac{\eta'}{\eta} f.$$ 
Since $f'=npf$, for $1\leq i\leq r$, we have $(t-\gamma_i)$ divides $(t'- \gamma_i') = \alpha_0 + \alpha_1 t +\alpha_2 t^2 -\gamma_i' $,
which implies that each $\gamma_i$ is a solution of $X' = \alpha_0 + \alpha_1 X+\alpha_2 X^2$.
\end{proof}

\begin{lemma}\label{noalgsolricatti}
Assume that the non-linear differential equation 
\begin{equation}\label{ricattieq}
 X' = \alpha_0 + \alpha_1 X +\alpha_2 X^2
\end{equation}
over $k$ has no algebraic solution and $\lambda_1,\lambda_2$ be its distinct transcendental solutions. 
Then the differential equation $Y' = \left( \frac{\alpha_1}{2} +\alpha_2 \lambda_1\right)Y$ has no algebraic solution over $k(\lambda_1,\lambda_2)$.
\end{lemma}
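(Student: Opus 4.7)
The plan is to argue by contradiction. Suppose $Y' = aY$ with $a = \alpha_1/2 + \alpha_2 \lambda_1$ has a non-zero algebraic solution over $K := k(\lambda_1, \lambda_2)$. Applying \Cref{constantalg} with base $K$ to a minimal-degree such solution and passing to the appropriate power, I reduce to the statement that there is no $f \in K^\times$ satisfying $f' = naf$ for any $n \in \nat$. I will first establish the following preliminary: for every non-zero integer $N$, the equation $Y' = NaY$ has no non-zero solution in $k(\lambda_1)$. This is \Cref{zeropole} applied with $t = \lambda_1$: the finite zeros and poles of such a solution would be algebraic Riccati solutions over $k$, which the hypothesis excludes; hence any solution lies in $k^\times$, but then comparing coefficients with the non-trivial $\lambda_1$-term on the right-hand side (non-zero because $\alpha_2 \neq 0$) forces it to vanish. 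Combining with \Cref{constantalg}, the preliminary also rules out any non-zero \emph{algebraic} solution of $Y' = NaY$ over $k(\lambda_1)$.

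I then split on whether $\lambda_2$ is algebraic or transcendental over $k(\lambda_1)$. In the algebraic case, $f$ is algebraic over $k(\lambda_1)$, so the algebraic strengthening of the preliminary immediately excludes $f$. In the transcendental case, $K = k(\lambda_1)(\lambda_2)$ is a rational function field, and the key move is to set $g := f(\lambda_2 - \lambda_1)^{-n}$. Using that both $\lambda_1$ and $\lambda_2$ satisfy the Riccati equation, one obtains $(\lambda_2 - \lambda_1)'/(\lambda_2 - \lambda_1) = \alpha_1 + \alpha_2(\lambda_1 + \lambda_2)$, whence a short computation yields $g'/g = -n(\alpha_1/2 + \alpha_2 \lambda_2)$. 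After replacing $g$ by $g^{-1}$ to flip the sign, this matches the form of \Cref{zeropole} applied over the base $k(\lambda_1)$ with $t = \lambda_2$, so the zeros and poles of $g$ in $\overline{k(\lambda_1)}$ are Riccati solutions over $k(\lambda_1)$.

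The main obstacle is the following auxiliary claim: under our hypothesis the only Riccati solution in $\overline{k(\lambda_1)}$ is $\lambda_1$ itself. Once it is proved, $g$ must be of the form $\tilde\eta (\lambda_2 - \lambda_1)^m$ with $\tilde\eta \in k(\lambda_1)^\times$; matching coefficients of $\lambda_2$ on the two sides of $g'/g = -n(\alpha_1/2 + \alpha_2\lambda_2)$ forces $m = -n$, and the remaining part gives $\tilde\eta'/\tilde\eta = na$, contradicting the preliminary. To prove the auxiliary claim, suppose there is a Riccati solution $\gamma \in \overline{k(\lambda_1)} \setminus \{\lambda_1\}$. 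The standard substitution $\gamma = \lambda_1 + 1/\beta$ produces a non-zero algebraic solution $\beta$ of the linear inhomogeneous equation $Y' = -2aY - \alpha_2$. Since the associated homogeneous equation $Y' = -2aY$ has no non-zero algebraic solution over $k(\lambda_1)$ (by the algebraic strengthening of the preliminary), any two algebraic solutions of the inhomogeneous equation coincide; Galois invariance then forces $\beta \in k(\lambda_1)$. A valuation analysis of the identity $\beta' + 2a\beta + \alpha_2 = 0$ at any finite pole $\delta \in \overline{k}$ of $\beta$---using that $(\lambda_1 - \delta)'$ is non-zero at $\delta$ precisely because $\delta$ is not a Riccati solution---shows that $\beta$ has no finite poles, hence $\beta \in k[\lambda_1]$. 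Finally, a leading-term comparison at $\lambda_1 = \infty$ reveals that the coefficient of $\lambda_1^{\deg\beta + 1}$ in $\beta' + 2a\beta + \alpha_2$ equals $(\deg\beta + 2)\, c_\beta\, \alpha_2$, where $c_\beta$ denotes the leading coefficient of $\beta$; this is non-zero for $\deg\beta \geq 1$, delivering the final contradiction.
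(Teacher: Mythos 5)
Your proposal is correct, and its skeleton coincides with the paper's: argue by contradiction, use \Cref{constantalg} to replace a minimal algebraic solution by a power $f$ lying in a rational function field, use \Cref{zeropole} to force the zeros and poles of $f$ to be Riccati solutions, and split according to whether $\lambda_2$ is algebraic or transcendental over $k(\lambda_1)$. The genuine difference is in the transcendental case. The paper views $f$ as a rational function of $\lambda_1$ over $K=k(\lambda_2)$ and simply asserts, without proof, that $\lambda_2$ is the only solution of \Cref{ricattieq} in $\ovl{K}$, which is exactly what yields the shape $f=\eta(\lambda_1-\lambda_2)^m$; you instead twist by $(\lambda_2-\lambda_1)^{-n}$ so as to work with a function of $\lambda_2$ over $k(\lambda_1)$, and you then prove the mirror statement (the only Riccati solution in $\ovl{k(\lambda_1)}$ is $\lambda_1$) via the substitution $\gamma=\lambda_1+1/\beta$, the strengthened preliminary for the homogeneous equation $Y'=-2aY$, Galois descent of $\beta$ to $k(\lambda_1)$, and a pole/leading-term analysis. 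So your argument is somewhat longer, but it supplies a justification for precisely the step the paper leaves as a bare remark; the remaining algebra (getting $m=-n$ and $\tilde\eta'/\tilde\eta=na$, then contradicting the preliminary) is the same computation the paper performs with $\eta^{-1}$ over $k(\lambda_2)$. Two small cosmetic points: the final coefficient $(\deg\beta+2)c_\beta\alpha_2$ is non-zero for $\deg\beta=0$ as well (since $\car(k)\neq 2$), so the constant case you left outside ``$\deg\beta\geq 1$'' is covered by the same formula; and, exactly like the paper's proof, your argument tacitly uses characteristic-zero-type facts (non-vanishing of the integers $n$, $m$, $\deg\beta+2$, and commutation of the derivation with separable Galois conjugation), which is consistent with the level of rigor of the original.
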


\begin{proof}
Set $L = k(\lambda_1,\lambda_2)$ and $p = \frac{\alpha_1}{2} +\alpha_2 \lambda_1$.
Suppose that $Y' = p Y$ has a solution in $\ovl{L}$.

First assume that $\lambda_2 \in \ovl{k(\lambda_1)}$. 
Then $\ovl{L} = \ovl{k(\lambda_1)}$. 
Let $\theta \in \ovl{L}$ be a solution of $Y' = p Y$ such that $n = [k(\lambda_1,\theta): k(\lambda_1)]$ is minimal.
By \Cref{constantalg}, it follows that $\theta^n = f \in k(\lambda_1)$. 
Then $f' = npf $.
Since \Cref{ricattieq} has no solution in $\ovl{k}$, by \Cref{zeropole} $f \in k$.
This implies $p \in k$, which is a contradiction. 

Now assume that $\lambda_2 \notin \ovl{k(\lambda_1)}$. 
Set $K =k(\lambda_2)$.
Note that $\lambda_2$ is the only solution of \Cref{ricattieq} in $\ovl{K}$.
Let $\theta \in \ovl{L}$ be a solution of $Y' = p Y$ such that $n = [K(\lambda_1,\theta): K(\lambda_1)]$ is minimal.
Then, by \Cref{constantalg},  $\theta^n = f \in K(\lambda_1)$ and $f' = npf$.
By \Cref{zeropole}, we have that $f  = \eta (\lambda_1-\lambda_2)^m$ for some $m\in \zz$ and $\eta \in K$.

Substituting $f= \eta (\lambda_1 -\lambda_2)^{m} $ in  $f' = npf $, we further get 
$$\frac{\eta'}{n\eta} + \frac{m}{n} \big(\alpha_1 + \alpha_2 (\lambda_1- \lambda_2)\big) = \frac{\alpha_1}{2} +\alpha_2 \lambda_1 \,. $$
On comparing the coefficients, we obtain  $m = n $ and $(\eta^{-1})' = n\left( \frac{\alpha_1}{2}+ \alpha_2 \lambda_2\right)\eta^{-1}$.
Since \Cref{ricattieq} has no solution in $\ovl{k}$, using \Cref{zeropole}, we arrive at a contradiction as above .
\end{proof}

\section{Splitting of differential quaternion algebras}{}

	Let	$Q=(\alpha,\beta)_k$ be a quaternion algebra with generators $u,v$.
	For $\xi \in \ovl{k}$ such that $\xi^2 = \alpha$, the homomorphism $\mathbf{\Phi_{Q, \xi}}: Q\otimes_kk(\xi) \rightarrow M_2(k(\xi)) $
	determined by
	\begin{center}
		$u \otimes 1 \mapsto A =  \left(
		\begin{matrix}
			\xi	& \\
			&-\xi\\
		\end{matrix} \right) ;\,\,\,\,
		v \otimes 1 \mapsto  B = \left(
		\begin{matrix}
			& \beta \\
			1 &\\
		\end{matrix} \right),$
	\end{center}
	and $\lambda\otimes r \mapsto \lambda r I_2$ where $\lambda \in k$ and $r\in k(\xi)$ is an isomorphism of $k(\xi)$-algebras. Let $d=d_{(u,v)}+\partial_{\vartheta}$, where $\vartheta=a_1u+a_2v+a_3uv\in Q^0$, be a derivation in $Der(Q/(k,\,'))$. The isomorphism $\Phi_{Q, \xi}$ induces a differential isomorphism $(Q\otimes_kk(\xi),d^\ast) \rightarrow (M_2(k(\xi)),d_P)$ where 
	\begin{equation}\label{matrixp}
		P=\left(
		\begin{matrix}
			a_1\xi+\frac{\beta'}{4\beta}	& (a_2+a_3\xi)\beta \\
			a_2-a_3\xi & 	-a_1\xi-\frac{\beta'}{4\beta}\\
		\end{matrix} \right),
	\end{equation}
	see \cite[Theorem 4.1]{AKVRS}.
	By  \Cref{JM_split}, the differential algebra $(M_2 (k(\xi)), d_P)$ is split over $(L, \, ')$ if and only if there exists $F\in GL_2(L)$ such that
	$F' = PF$. Write $ F = (f_{ij})$ with $f_{ij} \in L$. By comparing the matrix entries in the equation $F' = PF$ we obtain
	\begin{align}
		f_{11}' &= \left (a_1\xi + \frac{\beta'}{4\beta}\right) f_{11}+ (a_2+a_3\xi) \beta f_{21} \label{f11},\\
		f_{21}' &=  (a_2 - a_3\xi) f_{11} -\left (a_1\xi + \frac{\beta'}{4\beta}\right) f_{21},\label{f21}\\
		f_{12}' &= \left (a_1\xi + \frac{\beta'}{4\beta}\right) f_{12}+ (a_2+a_3\xi) \beta f_{22},\label{f12}\\
		f_{22}' &=  (a_2 - a_3\xi) f_{12} -\left (a_1\xi + \frac{\beta'}{4\beta}\right) f_{22}.\label{f22}
	\end{align}	
	Taking $L=k(\xi)(f_{11},f_{12},f_{21},f_{22})$, it is clear that $(Q,d)$ is always split by a differential field extension of transcendence degree at most $4$.

Here, we will show that a differential quaternion algebra is split by a differential field extension of transcendence degree at most $3$.

\begin{proposition}\label{splitequivalence}
Let $\xi \in \ovl{k}$ be such that $\xi^2 = \alpha$.
Let $(L,\,')$ be a differential field extension of $(k (\xi), \,')$. 
\begin{enumerate}[$(a)$]
\item\label{splitequivalencea} If $(Q,d)$ is split by $(L,\,')$, then the differential equation 
\begin{equation}\label{nonlineareq}
X' = (a_2+ a_3\xi)\beta + 2 \left(a_1\xi + \frac{\beta'}{4\beta}\right) X - (a_2- a_3\xi)X^2
\end{equation}
has at least two distinct solutions in $L$.  

\item\label{splitequivalenceb} If \Cref{nonlineareq}
has at least two distinct solutions (say $\lambda_1,\lambda_2$) in $L$, then 
$(Q,d)$ is split by a simple extension $(L(\mu),\,')$ where $\mu$ is a solution of the differential equation 
\begin{equation}\label{linearriccati}
\frac{Y'}{Y} =(a_2-a_3\xi)\lambda_1 -\left( a_1\xi+ \frac{\beta'}{4\beta}\right).	
\end{equation} 
\end{enumerate} 
\end{proposition}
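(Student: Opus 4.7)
The plan is to apply \Cref{JM_split} to the identification $\Phi_{Q,\xi}$ recalled just before the proposition: splitting of $(Q,d)$ over a differential extension $L \supseteq k(\xi)$ is equivalent to the existence of a matrix $F \in GL_2(L)$ satisfying $F' = PF$, i.e.\ a pair of $L$-linearly independent column solutions of the scalar system (\ref{f11})--(\ref{f22}). Both parts of the proposition then amount to passing between such matrices $F$ and solutions of the Riccati equation (\ref{nonlineareq}).

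For part (a), starting from $F = (f_{ij}) \in GL_2(L)$ with $F' = PF$, I would set $\lambda_i = f_{1i}/f_{2i}$ for $i = 1, 2$. Differentiating this ratio and substituting from (\ref{f11}) and (\ref{f21}) (resp.\ (\ref{f12}) and (\ref{f22})) gives, after the $\left(a_1\xi + \frac{\beta'}{4\beta}\right)\lambda_i$ terms cancel, exactly (\ref{nonlineareq}). Distinctness follows because $\lambda_1 = \lambda_2$ forces $f_{11}f_{22} = f_{12}f_{21}$, contradicting $\det F \neq 0$. The one subtlety is checking $f_{2i} \neq 0$: if $f_{2i} = 0$, then the corresponding equation forces $(a_2 - a_3\xi) f_{1i} = 0$, and since $\xi \notin k$ (else $Q$ already splits), this gives either $f_{1i} = 0$ (contradicting $\det F \neq 0$) or $a_2 = a_3 = 0$, in which case (\ref{nonlineareq}) collapses to the linear equation $X' = 2\left(a_1\xi + \frac{\beta'}{4\beta}\right)X$, for which $0$ together with the nonzero $f_{1i}^2$ are two distinct solutions.

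For part (b), given distinct solutions $\lambda_1, \lambda_2 \in L$ of (\ref{nonlineareq}) and a nonzero solution $\mu$ of (\ref{linearriccati}) in a suitable differential extension $L(\mu)$, I would exhibit the explicit fundamental matrix
$$F = \begin{pmatrix} \lambda_1 \mu & \lambda_2 \tau \\ \mu & \tau \end{pmatrix} \qquad \text{with} \quad \tau = [\mu(\lambda_1-\lambda_2)]^{-1} \in L(\mu).$$
A direct computation, using the Riccati equation for $\lambda_1, \lambda_2$ and the ODE defining $\mu$, yields $\tau'/\tau = (a_2 - a_3\xi)\lambda_2 - \left(a_1\xi + \frac{\beta'}{4\beta}\right)$, the analogue of (\ref{linearriccati}) for $\lambda_2$; the verification that the first column of $F$ satisfies $F' = PF$ (essentially the reverse of the calculation in part (a)) then applies verbatim to the second column. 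Since $\det F = 1 \neq 0$, the conclusion follows from \Cref{JM_split}.

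The main obstacle is guessing the form of $F$ in part (b). The column shape $(\lambda_i y_i, y_i)^T$ is forced by part (a); that one $y_i$ can be taken to be $\mu$ is immediate from (\ref{linearriccati}); and the normalization $\tau = [\mu(\lambda_1-\lambda_2)]^{-1}$ is then pinned down by requiring $\det F$ to be a nonzero constant, which must hold because $\mathrm{tr}(P) = 0$. The degenerate sub-case in part (a) is a minor but necessary wrinkle.
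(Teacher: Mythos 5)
Your proposal is correct and takes essentially the same route as the paper: in part (a) you extract the two Riccati solutions $f_{11}/f_{21}$ and $f_{12}/f_{22}$ from a fundamental matrix provided by \Cref{JM_split}, and in part (b) you write down exactly the paper's matrix, with columns $(\lambda_1\mu,\mu)$ and $(\lambda_2\tau,\tau)$ where $\tau=\bigl(\mu(\lambda_1-\lambda_2)\bigr)^{-1}$ and $\det F=1$. Your aside that $\xi\notin k$ ``since otherwise $Q$ already splits'' is not a legitimate reduction (the proposition allows $\alpha\in\sq{k}$, and algebraic splitting of $Q$ does not imply differential splitting), but the degenerate situation it sweeps aside, namely $a_2=\pm a_3\xi\neq0$ with $\xi\in k$, is equally glossed over in the paper's own claim that all $f_{ij}\neq0$, so this does not constitute a gap relative to the paper's argument.
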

\begin{proof}

\begin{enumerate}[$(a)$]
\item Assume that $(Q,d)$ is split by $(L,\,')$. 
Then the algebras $(Q\otimes k(\xi), d^\ast) \simeq (M_2(k(\xi)), d_P))$ are also split by $(L,\,')$. From \Cref{JM_split}, there exists $F=(f_{ij})\in GL_2(L)$ such that $F'=PF$. 
First suppose  that $a_2 =a_3 =0$. From Equations (\ref{f11}) to (\ref{f22}), we get that $f_{11},f_{12},f_{21}^{-1}$ and $f_{22}^{-1}$ are solutions of the  \Cref{nonlineareq}, $X' = \left (a_1\xi + \frac{\beta'}{4\beta}\right) X$.  
 
Now suppose $a_2 \neq 0$ or $a_3 \neq 0$. Since $F\in GL_2(L)$, from Equations (\ref{f11}) to (\ref{f22}), we conclude that $f_{ij} \neq 0$ for all $i,j$.
Considering the equations $\frac{(\ref{f11})}{f_{11}} - \frac{(\ref{f21})}{f_{21}}$ and $\frac{(\ref{f12})}{f_{12}} - \frac{(\ref{f22})}{f_{22}}$, we obtain that $\frac{f_{11}}{f_{21}}$ and $\frac{f_{12}}{ f_{22}}$ are solutions of \Cref{nonlineareq}.
Since $F\in GL_2(L)$, we have that $\frac{f_{11}}{f_{21}} \neq \frac{f_{12}}{ f_{22}}$.

\item 
Set 
$$F = \left(
\begin{matrix}
\lambda_1 \mu & \frac{\lambda_2}{\mu(\lambda_1-\lambda_2)} \\
\mu & \frac{1}{\mu(\lambda_1-\lambda_2)} \\
\end{matrix}\right).$$

Note that $\det(F)=1$ and $F'=PF$. Thus, by \Cref{JM_split} $(M_2(L(\mu)),d_P)\simeq(M_2(L(\mu)),\,')$. Hence, $(Q,d)$ splits over $(L(\mu),\,')$. 
\end{enumerate}
\end{proof}

Using this proposition, we can easily construct an example where differential splitting field of a quaternion algebra is of transcendence degree at most $1$. 
\begin{example}\label{splittransone}
	For $a\in k$, let  $d=d_{(u,v)}+\partial_{av}$ be a derivation on the quaternion algebra $Q=(\alpha,\beta)_k$. Then the \Cref{nonlineareq} becomes $X'=a(\beta-X^2)+\frac{\beta'}{2\beta}X$. Let $\eta\in \ovl{k}$ such that $\eta^2=\beta$, then $\eta,-\eta$ are  two distinct solutions of $X'=a(\beta-X^2)+\frac{\beta'}{2\beta}X$. From \Cref{splitequivalence}(\ref{splitequivalenceb}), $(Q,d)$ is split by $k(\eta,\mu)$ where $\mu'=(a\eta-\frac{\beta'}{4\beta}) \mu$. Note that the transcendence degree of $k(\eta,\mu)/k$ is at most $1$.

Since the generators $u,v$ and $uv$ are symmetric in the sense that $d_{(u,v)}=d_{(v,u)}=d_{(u,uv)}$, we have that $(Q,d)$, where $d=d_{(u,v)}+\partial_w$, is split by a differential field of transcendence degree at most $1$ if $w$ lies in $k(u)$, $k(v)$ or $k(uv)$. This generalizes \cite[Example 4.2]{AKVRS}.
\end{example}
Now we prove the main theorem of this section.
\begin{theorem}\label{splitequivalencec} $(Q,d)$ is split by a differential field extension of $(k,\,')$ of transcendence degree at~most $3$.
\end{theorem}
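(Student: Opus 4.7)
The plan is to apply \Cref{splitequivalence}(b): it suffices to produce a differential extension $(L_3,\,')$ of $(k(\xi),\,')$ containing two distinct solutions $\lambda_1,\lambda_2$ of \Cref{nonlineareq} and a solution $\mu$ of \Cref{linearriccati}. Since $\xi$ is algebraic over $k$, $\text{trdeg}(k(\xi)/k)=0$, so it is enough to realize $\lambda_1,\lambda_2,\mu$ via three successive simple transcendental extensions of $k(\xi)$, each raising transcendence degree by exactly one.

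The engine of the construction is the following standard fact about extending derivations: given a differential field $(K,D)$ and any polynomial $g(T)\in K[T]$, there is a unique extension of $D$ to the rational function field $K(T)$ (with $T$ transcendental) satisfying $T'=g(T)$; in the resulting differential field, $T$ is a solution of the equation $X'=g(X)$.

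Applying this three times in sequence, I would first form $L_1=k(\xi)(T_1)$ with $T_1'$ equal to the right-hand side of \Cref{nonlineareq} evaluated at $T_1$, and set $\lambda_1:=T_1$. Next, form $L_2=L_1(T_2)$ with $T_2'$ equal to the same expression evaluated at $T_2$, and set $\lambda_2:=T_2$; the element $\lambda_2$ lies outside $L_1$ whereas $\lambda_1\in L_1$, so the two solutions are distinct. Finally, form $L_3=L_2(T_3)$ with $T_3'=pT_3$, where $p=(a_2-a_3\xi)\lambda_1-(a_1\xi+\beta'/(4\beta))$, and set $\mu:=T_3$; being transcendental, $\mu$ is nonzero and satisfies \Cref{linearriccati}. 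Each of the three adjunctions is a simple transcendental extension, so $\text{trdeg}(L_3/k)\le 3$. By \Cref{splitequivalence}(b) applied to $L_2$, the algebra $(Q,d)$ splits over $L_2(\mu)=L_3$, giving the desired bound.

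There is no serious obstacle here once the strategy is identified: the single observation that drives everything is that one simple transcendental adjunction suffices to realize a formal solution of \emph{any} first-order algebraic differential equation, so the quadratic nature of \Cref{nonlineareq} costs nothing extra. One could alternatively streamline the second step by the classical Riccati substitution $X=\lambda_1+Z^{-1}$, which converts \Cref{nonlineareq} into a first-order \emph{linear} equation in $Z$ and then recovers $\lambda_2$ as $\lambda_1+Z^{-1}$; this is more elegant but unnecessary for the transcendence-degree count, since a simple transcendental extension works equally well in either case.
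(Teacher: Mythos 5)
Your proof is correct and follows essentially the same route as the paper: both apply \Cref{splitequivalence}(\ref{splitequivalenceb}) to a field $k(\xi,\lambda_1,\lambda_2,\mu)$ obtained by adjoining two solutions of \Cref{nonlineareq} and one solution of \Cref{linearriccati}, giving transcendence degree at most $3$. The only difference is that you spell out, via successive transcendental adjunctions with prescribed derivations, the existence of such solutions, which the paper leaves implicit.
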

\begin{proof}
It follows from \Cref{splitequivalence}(\ref{splitequivalenceb}) that $(Q,d)$ is split by the differential field extension $L:=k(\xi,\lambda_1,\lambda_2,\mu)$ where $\lambda_1,\lambda_2$ are solutions of \Cref{nonlineareq} and $\mu$ is a solution of \Cref{linearriccati}. Clearly, $\trdeg_k(L)\leq 3$.
\end{proof}

\section{Examples of splitting fields with transcendence degree $2$ and $3$}
In this section, we construct examples of differential splitting fields with transcendence degree at least $2$ and $3$. We first observe the following result, which is crucial in the construction of such splitting fields.
\begin{proposition}\label{splittransnotone}
The differential quaternion algebra $(Q,d)$ is split by a field extension of transcendence degree at least $2$ if 
 the non-linear differential equation  (\ref{nonlineareq})
\begin{equation*}
X' = (a_2+ a_3\xi)\beta + 2 \left(a_1\xi + \frac{\beta'}{4\beta}\right) X - (a_2- a_3\xi)X^2
\end{equation*}
has no algebraic solutions over $k$.
Moreover, if any two solutions of \Cref{nonlineareq} are algebraically independent over $k$, then $(Q,d)$ is split by a field extension of transcendence degree at least $3$.

\end{proposition}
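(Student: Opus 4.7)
The plan is, assuming $(Q,d)$ is split by a differential field $(L,\,')\supseteq(k,\,')$, to exhibit enough algebraically independent elements of $L$ under each hypothesis. Replacing $L$ by $L(\xi)$ does not change its transcendence degree over $k$, so we may assume $\xi\in L$ and invoke \Cref{splitequivalence}(\ref{splitequivalencea}). Its proof furnishes an $F=(f_{ij})\in GL_2(L)$ with $F'=PF$, and the ratios $\lambda_1:=f_{11}/f_{21}$, $\lambda_2:=f_{12}/f_{22}$ are two distinct solutions of \Cref{nonlineareq} in $L$. Dividing \Cref{f21} through by $f_{21}$ and substituting $\lambda_1=f_{11}/f_{21}$ shows that $\mu:=f_{21}\in L$ itself satisfies \Cref{linearriccati}. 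The key observation is that $L$ already contains such a $\mu$, with no further extension required.

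For the first assertion, suppose \Cref{nonlineareq} has no algebraic solution over $k$. Then $\lambda_1$ and $\lambda_2$ are both transcendental over $k$. Applying \Cref{noalgsolricatti} with $\alpha_0=(a_2+a_3\xi)\beta$, $\alpha_1=2(a_1\xi+\beta'/(4\beta))$, $\alpha_2=-(a_2-a_3\xi)$ shows that $Y'=(\alpha_1/2+\alpha_2\lambda_1)Y$ has no algebraic solution over $k(\lambda_1,\lambda_2)$. Since \Cref{linearriccati} is obtained from this equation via the substitution $Y\mapsto Y^{-1}$, it too has no algebraic solution over $k(\lambda_1,\lambda_2)$. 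Hence $\mu$ is transcendental over $k(\lambda_1,\lambda_2)$, and \emph{a fortiori} over $k(\lambda_1)$. Combined with the transcendence of $\lambda_1$ over $k$, this yields $\trdeg_k L\geq\trdeg_k k(\lambda_1,\mu)=2$.

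For the second assertion, assume further that any two distinct solutions of \Cref{nonlineareq} are algebraically independent over $k$. Then $\trdeg_k k(\lambda_1,\lambda_2)=2$, and adjoining $\mu$, which is transcendental over $k(\lambda_1,\lambda_2)$ by the argument above, gives $\trdeg_k L\geq 3$.

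The main obstacle is locating $\mu$ inside $L$ (rather than in a further extension, as in \Cref{splitequivalence}(\ref{splitequivalenceb})) so that \Cref{noalgsolricatti} can be leveraged to push the transcendence degree above $1$. The edge case $(a_2,a_3)=(0,0)$, in which \Cref{nonlineareq} degenerates to a linear equation with $X=0$ as an algebraic solution, makes the hypothesis vacuous and needs no separate treatment.
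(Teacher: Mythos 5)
Your argument is correct, and it uses the same two ingredients as the paper's proof --- the solutions of \Cref{nonlineareq} supplied by \Cref{splitequivalence} and the transcendence statement of \Cref{noalgsolricatti} --- but it routes them differently, and the difference is worth recording. The paper starts from a field $L$ containing two distinct solutions of \Cref{nonlineareq}, invokes \Cref{splitequivalence}(\ref{splitequivalenceb}) to split $(Q,d)$ over the \emph{further} extension $L(\mu)$, and applies \Cref{noalgsolricatti} to get $\trdeg_k L(\mu)=\trdeg_k L+1$; read literally, this bounds the transcendence degree of the splitting field constructed in that particular way. You instead take an arbitrary differential splitting field $(L,\,')$ and observe that, besides the two solutions $\lambda_1=f_{11}/f_{21}$ and $\lambda_2=f_{12}/f_{22}$ of \Cref{nonlineareq} coming from $F'=PF$, the entry $\mu=f_{21}$ already lies in $L$ and solves \Cref{linearriccati} (divide \Cref{f21} by $f_{21}$); \Cref{noalgsolricatti}, applied with base field $k(\xi)$ (harmless, since $\ovl{k(\xi)}=\ovl{k}$, and the paper uses it the same way) together with the inversion $Y\mapsto Y^{-1}$, then makes $\mu$ transcendental over $k(\lambda_1,\lambda_2)$, giving $\trdeg_k L\geq 2$, resp.\ $\geq 3$, for \emph{every} splitting field. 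That universal form is exactly how the proposition is applied in \Cref{transdegthree}, so your version is, if anything, the more complete argument, while the paper's is shorter because it reuses part (\ref{splitequivalenceb}) rather than revisiting the matrix entries. One caveat you share with the paper: besides $(a_2,a_3)=(0,0)$, the degenerate case $a_2-a_3\xi=0\neq a_2+a_3\xi$ (possible only when $\xi\in k$) also kills the quadratic term of \Cref{nonlineareq}, and there both the nonvanishing of all $f_{ij}$ and the applicability of \Cref{noalgsolricatti} would require separate attention; since the published proof passes over this case in the same way, it is not a gap specific to your write-up.
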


\begin{proof}
Let $(L,\,')$ be a differential field extension of $(k(\xi),\,')$ containing two distinct roots of \Cref{nonlineareq}.
Let $\lambda \in L$ be a solution of \Cref{nonlineareq}. 
By \Cref{splitequivalence}(\ref{splitequivalenceb}), $(Q,d)$ is split by an extension $L(\mu)$, where 
$\mu' =  (a_2-a_3\xi)\lambda -\left( a_1\xi+ \frac{\beta'}{4\beta}\right) \mu$.
Since \Cref{nonlineareq} has no algebraic solution over $k$, by \Cref{noalgsolricatti}, it follows that $\trdeg_k{L(\mu)} =\trdeg_k{L}+1$. 
\end{proof}

\begin{example}\label{transdegthree}
Let $(k,\,')$ be the differential field where $k =\qq(t)$ and $t' = 1$.
Let $Q=(1,t)_k$, $u,v\in Q^0$ such that $u^2=1$, $v^2=t$ and $vu=-uv$.

{\it (i)} Consider the derivation $d=d_{(u,v)}+ \partial_\vartheta$ on $Q$ with $\vartheta = \frac{1}{4t}(-u-2v+2uv)$.  
Then the \Cref{nonlineareq} becomes $X'=\frac{1}{t}X^2$. The general solution of this differential equation is given by $\frac{-1}{\log(t)+c}$, where $c$ is an arbitrary constant. This implies, $X'=\frac{1}{t}X^2$ has no solution in $\ovl k$ and infinitely many solutions in  $k(\log(t))$. Since the transcendence degree of $k(\log(t))$ over $k$ is $1$, from \Cref{splittransnotone}  we have the transcendence degree of splitting fields of $(Q,d)$ is at least $2$.

{\it (ii)} Consider the derivation  $d=d_{(u,v)}+\partial_\vartheta$ on $Q$ with $\vartheta = -\frac{1}{4t}u-v$ 
Then the \Cref{nonlineareq} becomes $X'=X^2-t$, which is a Riccati differential equation that is not solvable by any algebraic function over $k$ (see \cite[Example p.42-43]{Kap}). 

Let $K=k(\lambda)$ be a rational function field where $\lambda'=\lambda^2-t$ and $\lambda+\frac{1}{\theta}$ be another solution of $X'=X^2-t$ in $\ovl{K}$, i.e $\theta\in \ovl{K}$. Then it is easy to observe that $\theta'=-2\lambda\theta-1$.
Let $p(X)=X^m+a_{m-1}X^{m-1}+\dots+a_1X+a_0$ be the minimal polynomial of $\theta$ over $K$.  Consider a differential homomorphism $\phi:K[X]\to K[\theta]$ defined by $\phi(X)=\theta$. Then the kernel of $\phi$ is the prime ideal generated by $p(X)$. Since $p'(\theta)=0$, $p$ divides $p'$ and whereby $p(X)'=-2m\lambda p(X)$.
On comparing the coefficients of $X^{m-1}$, we obtain $a_{m-1}$ is a solution of the differential equation 
\begin{equation}\label{firstorder}
Y'=-2\lambda Y+c
\end{equation}
over $K$, when $c=m$.
 Let $m\neq 0$ then $a_{m-1}\neq0$. Write $a_{m-1}=\frac{f}{g}$ where $f,g$ are coprime polynomials in $k[\lambda]$ and $g$ is monic. Then $f'g-fg'=-2\lambda fg+mg^2$ implies $g$ divides $g'$.

Let $\gamma_1, \ldots, \gamma_r \in \ovl{k}$ and $m_1, \ldots, m_r \in \zz$ be such that $g = \prod_{i=1}^r (\lambda - \gamma_i)^{m_i} $.
Then 
$$g' = \prod_{i=1}^r (\lambda - \gamma_i)^{m_i-1} \cdot \left(\sum_{i=1}^{r} \Big((\lambda'-\gamma_i')\prod_{j\neq i} (\lambda - \gamma_j)\Big)\right).$$ 
Since $g$ divides $g'$, for $1\leq i\leq r$, we have $(\lambda-\gamma_i)$ divides $(\lambda'- \gamma_i') = \lambda^2-t-\gamma_i' $,
which implies that each $\gamma_i$ is a solution of $X' = X^2-t$. Therefore, each $m_i=0$ and $g=1$. 

Write $f=\sum_{i=0}^sf_i\lambda^i$, where $f_i\in k$, $f_s\neq0$. Then $f'=-2\lambda f+c$ implies $s=-2$, which is a contradiction. Therefore, $m=0$ and $\theta\in K$ is a solution of \Cref{firstorder} for $c=-1$, which is again not possible. Thus, any field extension of $k$ containing two distinct solutions of $X'=X^2-t$ must be of transcendence degree at least $2$. By \Cref{splittransnotone}, splitting fields of $((1,t)_k,d)$ have transcendence degree at least $3$ over $(k,\,')$.
\end{example}

\section{Splitting fields with transcendence degree $1$}{}\label{finitefields}
 In \Cref{splittransone}, we have seen that for derivations of the form $d =  d_{(u,v)}+ \partial_{au}$ where $a\in k$, the transcendence degree of splitting field is at most one. We will further analyze these derivations to have the transcendence degree equal to $1$.
\begin{proposition}\label{finitesplit}
Let $(Q,d)$  be a differential quaternion algebra where $d =  d_{(u,v)}+ \partial_{au}$ for $a\in k$. 
Then  $(Q,d)$ is split by a finite extension of $(k,\,')$, if and only if
$$a \in \left\{ \frac{\theta'}{n\xi\theta}\mid \theta \in k(\xi), n\in \nat \right\}.$$
\end{proposition}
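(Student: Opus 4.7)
The plan is to exploit the fact that, for $\vartheta = au$, the matrix $P$ of \eqref{matrixp} is diagonal; this reduces the splitting criterion to the existence of an algebraic solution of a single homogeneous linear equation, to which \Cref{constantalg} applies directly.

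First, we specialize to $a_1 = a$, $a_2 = a_3 = 0$, so \eqref{matrixp} gives $P = \diag(g,-g)$ with $g := a\xi + \tfrac{\beta'}{4\beta} \in k(\xi)$. The equation $F' = PF$ for $F = (f_{ij}) \in \GL_2(L)$ then decouples into $f_{1j}' = g f_{1j}$ and $f_{2j}' = -g f_{2j}$. Combining \Cref{JM_split} with this observation gives the key reduction: if such an $F$ exists then the first row of $F$ is non-zero (since $\det F \ne 0$), so some $y = f_{1j} \in \mg L$ satisfies $y' = g y$; conversely, for such a $y$ the matrix $\diag(y, y^{-1}) \in \GL_2(L)$ is a valid choice of $F$. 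Hence $(Q,d)$ splits over a finite extension of $k$ if and only if $Y' = g Y$ has a non-zero algebraic solution over $k(\xi)$.

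Second, we apply \Cref{constantalg} with base field $k(\xi)$ and coefficient $g$: a non-zero algebraic solution of $Y' = gY$ exists iff there are $f \in \mg{k(\xi)}$ and $n \in \nat$ with $\tfrac{f'}{n f} = g = a\xi + \tfrac{\beta'}{4\beta}$. Indeed, if $\theta$ is an algebraic solution of minimal degree $n$, then $f := \theta^n \in k(\xi)$ satisfies $f' = ng f$; conversely, any $n$-th root of such an $f$ solves $Y' = gY$.

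Finally, we convert $\tfrac{f'}{n f} = a\xi + \tfrac{\beta'}{4\beta}$ into the form $a = \tfrac{\theta'}{n\xi\theta}$ by absorbing the $\tfrac{\beta'}{4\beta}$ term into a logarithmic derivative. Setting $\theta := f^4/\beta^n \in \mg{k(\xi)}$, a direct computation yields
\[
\frac{\theta'}{4n\,\theta} \;=\; \frac{4f'/f - n\beta'/\beta}{4n} \;=\; \frac{f'}{nf} - \frac{\beta'}{4\beta} \;=\; a\xi,
\]
so $a = \tfrac{\theta'}{(4n)\,\xi\,\theta}$ lies in the prescribed set. Running this identity backwards with $f := \theta^4\beta^n$ and $\tilde n := 4n$ converts $a = \tfrac{\theta'}{n\xi\theta}$ to $\tfrac{f'}{\tilde n f} = g$, after which Steps~1--2 produce a finite splitting field. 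The main obstacle, and the only non-routine point in the argument, is this third step's bookkeeping: one cannot keep the same integer $n$ when passing between the two forms of the condition, and it is precisely the freedom to range over all $n \in \nat$ in the statement that accommodates the inflation $n \mapsto 4n$ forced by the $\tfrac{\beta'}{4\beta}$ summand coming from the derivative of $\Phi_{Q,\xi}$.
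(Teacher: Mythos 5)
Your argument is correct and follows essentially the same route as the paper: reduce splitting by a finite extension to the existence of a nonzero algebraic solution of the single linear equation $Y'=\left(a\xi+\frac{\beta'}{4\beta}\right)Y$ over $k(\xi)$, and then descend such a solution to $k(\xi)$ itself. The only cosmetic difference is the descent device: you take the $n$-th power of a minimal-degree solution via \Cref{constantalg} and absorb the $\frac{\beta'}{4\beta}$ term with the explicit twist $\theta=f^4/\beta^n$, whereas the paper quotes \Cref{splitequivalence}, passes to $L(\sqrt[4]{\beta})$, and takes the norm $N_{k(\xi)(\lambda)/k(\xi)}(\lambda)$ of an algebraic solution.
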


\begin{proof}
By \Cref{splitequivalence}, $(Q,d)$ is split by a finite extension $(L,\,')$ of $(k,\,')$ if and only if 
$L$ contains a solution of the equation
\begin{equation}\label{lineareq}
 X' =  \left(a\xi + \frac{\beta'}{4\beta}\right) X,
 \end{equation}
that is, if and only if 
$L(\sqrt[4]{\beta})$ contains a solution of the equation $ X' =  a\xi X$.
Let $\lambda \in \ovl{k(\xi)}$ be a solution of \Cref{lineareq} and $ \theta = N_{k(\xi)(\lambda)/k(\xi)}(\lambda) $.
For $n=[k(\xi)(\lambda): k(\xi)]$, we have $\theta ' = n  a\xi \theta$.
\end{proof}

\begin{lemma}
For $\theta \in k(\xi)$, $\frac{\theta'}{\theta}\in \xi k$ if and only if  $N_{k(\xi)/k}(\theta) \in C_k$.
\end{lemma}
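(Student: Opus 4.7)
The plan is to identify $\xi k$ as the $(-1)$-eigenspace of the nontrivial $k$-automorphism $\sigma\colon k(\xi)\to k(\xi)$ sending $\xi$ to $-\xi$, and then use the standard identity relating the logarithmic derivative of the norm to the trace of the logarithmic derivative. The intended setting is $\xi\notin k$, i.e.\ $\alpha$ is not a square in $k$, so that $[k(\xi):k]=2$ and $\sigma$ is available.

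The first step is the easy observation that, writing $x=a+b\xi$ with $a,b\in k$, we have $\sigma(x)=a-b\xi$, so the $k(\xi)/k$-trace $\mathrm{Tr}(x)=x+\sigma(x)=2a$ vanishes if and only if $x\in \xi k$. The second step is to show that $\sigma$ commutes with the derivation $'$ on $k(\xi)$. Since $\car k\neq 2$, the extension $k(\xi)/k$ is separable algebraic, and the derivation on $k$ extends uniquely to $k(\xi)$; both $'$ and $\sigma\circ {}'\circ\sigma$ are such extensions, hence they agree. (Equivalently, differentiating $\xi^2=\alpha$ yields $\xi'=\alpha'/(2\xi)\in\xi k$, so $\sigma(\xi')=-\xi'=\sigma(\xi)'$, and the claim follows from the Leibniz rule and $k$-linearity.)

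Combining these two ingredients, for any nonzero $\theta\in k(\xi)$ I would compute
$$\mathrm{Tr}\!\left(\frac{\theta'}{\theta}\right)=\frac{\theta'}{\theta}+\frac{\sigma(\theta)'}{\sigma(\theta)}=\frac{\theta'\sigma(\theta)+\theta\,\sigma(\theta)'}{\theta\,\sigma(\theta)}=\frac{N_{k(\xi)/k}(\theta)'}{N_{k(\xi)/k}(\theta)}.$$
Since $N_{k(\xi)/k}(\theta)\in \mg k$, this trace vanishes if and only if $N_{k(\xi)/k}(\theta)'=0$, i.e.\ $N_{k(\xi)/k}(\theta)\in C_k$. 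By the first step, vanishing of the trace is precisely the condition $\theta'/\theta\in \xi k$, which completes the equivalence. The only mildly nontrivial point is the $\sigma$-equivariance of the derivation; everything else is a one-line computation with the norm.
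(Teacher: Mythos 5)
Your argument is correct and is essentially the paper's own proof: both identify $\xi k$ with the trace-zero (equivalently, $-1$-eigenspace under conjugation) elements, use that conjugation commutes with the derivation, and deduce from the product rule that $\theta'/\theta$ being anti-invariant is equivalent to $N_{k(\xi)/k}(\theta)'=0$. Your write-up merely makes explicit the $\sigma$-equivariance of $'$ (via uniqueness of the extension to the separable extension $k(\xi)/k$), which the paper asserts without comment.
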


\begin{proof}
Let $\ovl{\theta}$ be the conjugate of $\theta$. 
Note that $\ovl{\theta}' = \ovl{\theta'}$. 
Then $\frac{\theta'}{\theta} \in \xi k$ if and only if 
$\frac{\theta'}{\theta}= - \ovl{\frac{\theta'}{\theta}}$,
 that is, $ (N_{k(\xi)/k}(\theta))' = \theta' \cdot \ovl{\theta} + \theta \cdot (\ovl{\theta})' =0$.
\end{proof}

\begin{remark}\label{constantnorm}
To describe the set of all $a \in k$ such that $(Q,d)$ is split by a finite extension of $(k, \,')$ we need to find the set $S = \{\theta \in k(\xi)\setminus k \mid N_{k(\xi)/k}(\theta) \in C_k \}$. 
Consider the non-empty subset $\tilde{S} = \{\theta  \in k(\xi) \setminus k\mid N_{k(\xi)/k}(\theta) \in \sq{C}_k \}$ of $S$. 
By Hilbert's Theorem 90 \cite[Example 2.3.4]{GilSza}, we have
$$\tilde{S} = C_k \cdot \left\{   \frac{\gamma}{\ovl{\gamma}} \mid   \gamma \in \mg {k(\xi)}  \right\}. $$

\end{remark}

\begin{example}\label{rationalfinitesplit}

Let $(k,\,')$ be the differential field where $k =\qq(t)$ and $t' = 1$.
Let $\alpha \in \qq[t]$ be an odd degree polynomial.
Then  
$S= \tilde S$ as in \Cref{constantnorm}.
For the derivation $d =  d_{(u,v)}+ \partial_{au} $ where $a\in k$, $(Q,d)$ is split by a finite extension of $(k,\,')$ if and only if 
$$a \in \left\{ \frac{\theta'}{n\xi\theta}\mid \theta \in \tilde S, n\in \nat \right\} .$$
\end{example}

\begin{theorem}\label{transdegone}
Let $(k,\,')$ be the differential field where $k =\qq(t)$ and $t' = 1$.
Let $\alpha \in \qq[t]$  be an odd degree polynomial.
Consider the quaternion algebra $Q = (\alpha, \beta)$ with  derivation $d=  d_{(u,v)} + \partial_{au}$ where $a =\frac{f}{g}\in \qq(t)$.
Then $(Q,d)$ is not split by any algebraic extension of $k$, if
\begin{enumerate}[$(a)$]
\item $deg(g)-deg(f)< \frac{deg(\alpha) +3}{2}$.
\item for a factor $h$ of $\alpha$, $h^2$ divides $g$.
\end{enumerate}
\end{theorem}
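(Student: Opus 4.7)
Combining \Cref{finitesplit} with \Cref{rationalfinitesplit} (which uses the odd degree of $\alpha$), the algebra $(Q,d)$ is split by an algebraic extension of $k$ if and only if there exist coprime $p,q \in \qq[t]$ and $n \in \nat$ such that
\[
a \;=\; \frac{N}{D}, \qquad N := 2\alpha(pq'-p'q)+pq\alpha', \quad D := n\alpha(p^2-q^2\alpha).
\]
This identity is obtained by writing $\theta = \gamma/\bar\gamma$ for $\gamma = p + q\xi \in k(\xi)^\times$ and computing $\theta'/(n\xi\theta)$; the constant prefactor in $\tilde S = \qq \cdot \{\gamma/\bar\gamma\}$ drops out of the logarithmic derivative, and a common $\qq[t]$-denominator in $p,q$ can be cleared. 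The plan is to show that each hypothesis forces the valuation of $a$ at a well-chosen place of $k$ to be incompatible with any such $N/D$.

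For (a), I compare degrees at infinity. Set $\deg\alpha = 2m+1$, $s = \deg p$, $r = \deg q$; since $2s$ is even and $2r+2m+1$ is odd, no cancellation occurs and $\deg(p^2 - q^2\alpha) = \max(2s,\,2r+2m+1)$. Combined with $\deg\alpha' = 2m$ and $\deg(pq' - p'q) \le s + r - 1$, this gives $\deg N \le 2m + s + r$. A short case split on whether $s \ge r + m + 1$ or $s \le r + m$ yields $\deg D - \deg N \ge m + 2 = (\deg\alpha + 3)/2$ in either case. Since $\deg g - \deg f = v_\infty(a) = \deg D - \deg N$, this contradicts (a).

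For (b), I use the $h$-adic valuation $v = v_h$. Let $e = v(\alpha) \ge 1$, $\pi = v(p)$, $\rho = v(q)$; coprimality forces $\min(\pi,\rho) = 0$, and separability of $h$ in characteristic zero gives $v(\alpha') = e - 1$. Writing $p = h^\pi\tilde p$, $q = h^\rho\tilde q$, $\alpha = h^e\tilde\alpha$, I expand both summands of $N$ to leading $h$-adic order: in the case $\rho = 0$ they combine to $(e - 2\pi)\,h^{e+\pi-1}h'\tilde p\tilde q\tilde\alpha$, and $v(D) = e + v(p^2 - q^2\alpha)$ is read off directly. In the subcases $\pi = \rho = 0$ and $\pi = 0 < \rho$, the computation yields $v(a) = -1$ and $v(a) \ge 0$ respectively, contradicting $v(g) \ge 2$.

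The main obstacle is the remaining subcase $\rho = 0 < \pi$ of part (b): when $h$ occurs in $\alpha$ with multiplicity $e > 2\pi$, the direct calculation gives $v(a) = -\pi - 1$, which is itself $\le -2$ and therefore compatible with $h^2 \mid g$. I anticipate that the intended reading of hypothesis (b) restricts $h$ to be a \emph{simple} factor of $\alpha$ (so $e = 1$, making $e > 2\pi$ impossible and forcing $v(a) = \pi - 1 - e \ge -1$); alternatively, the boundary subcase $e = 2\pi$ (where the leading coefficient $(e - 2\pi)$ vanishes) must be treated by a finer mod-$h^{e + \pi + 1}$ expansion together with the cancellation pattern of $p^2 - q^2\alpha$ modulo $h^{e+1}$.
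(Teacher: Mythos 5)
Your proposal follows the paper's proof essentially verbatim: the same reduction via \Cref{finitesplit}, \Cref{rationalfinitesplit} and Hilbert 90 to $a=\frac{2\alpha(\gamma_0\gamma_1'-\gamma_0'\gamma_1)+\alpha'\gamma_0\gamma_1}{n\alpha(\gamma_0^2-\alpha\gamma_1^2)}$ with $\gamma_0,\gamma_1$ coprime polynomials, the same degree count at infinity for (a) (your bound $\deg D-\deg N\ge m+2=\frac{\deg\alpha+3}{2}$ is exactly the paper's), and the same $h$-adic valuation count for (b) via the case split on whether $h$ divides $\gamma_0$ or $\gamma_1$. Concerning the obstacle you flag in (b): it is not a gap in your argument but in the paper's. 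The paper's proof of (b) asserts that $h\nmid\gamma_0$ forces $h^2$ not to divide the denominator $n\alpha(\gamma_0^2-\alpha\gamma_1^2)$, and that $h\mid\gamma_0$ forces its $h$-adic valuation to be exactly $2$; both assertions are true only when $v_h(\alpha)=1$, i.e.\ the paper implicitly takes $h$ to be a simple factor of $\alpha$, which is precisely the reading you anticipate. Under that reading your case analysis is complete: $e=1$ makes $e>2\pi$ and $e=2\pi$ impossible once $\pi\ge1$, so every case gives $v_h(a)\ge-1$, contradicting $h^2\mid g$. Moreover your problematic subcase $e>2\pi$ genuinely occurs for multiple factors: with $\alpha=t^3$, $\gamma_0=t$, $\gamma_1=1$ one gets $\theta=\frac{t+\xi}{t-\xi}$ with $N_{k(\xi)/k}(\theta)=1$ and $a=\frac{\theta'}{\xi\theta}=\frac{1}{t^2(1-t)}$, so this $(Q,d)$ is split by a finite extension of $k$ although $t\mid\alpha$ and $t^2\mid g$; hence condition (b) as a standalone hypothesis requires $h$ to be a simple factor (alternatively the theorem must be read as assuming (a) and (b) jointly, in which case the contradiction already comes from (a) alone). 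So: correct, same method, and your extra care in the $h$-adic analysis exposes an implicit hypothesis that the paper's own proof needs.
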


\begin{proof}
Suppose $(Q,d)$ is split by an algebraic extension of $k$.
Then, by \Cref{rationalfinitesplit}, there exists $\theta \in \tilde S$ such that $a = \frac{\theta'}{n\xi\theta}$. 
It follows from \Cref{constantnorm} that
$ \theta = c \frac{\gamma_0 + \xi \gamma_1}{\gamma_0 - \xi \gamma_1}$
where $c\in \qq$ and $\gamma_0, \gamma_1\in \qq[t]$ are coprime polynomials.
Observe that
$$
\frac{\theta'}{n\xi\theta} =  \frac{2\alpha( \gamma_0 \gamma_1' - \gamma_0' \gamma_1) + \alpha' \gamma_0\gamma_1}{ n\alpha(\gamma_0^2 -\alpha \gamma_1^2)} \in \qq(t).
$$
Set $p = 2\alpha( \gamma_0 \gamma_1' - \gamma_0' \gamma_1) + \alpha' \gamma_0\gamma_1$ and $q = n\alpha(\gamma_0^2 -\alpha \gamma_1^2)$.  

\begin{enumerate}[$(a)$]
\item Let $m =deg(\alpha), m_0 = deg( \gamma_0) $ and $m_1 = deg( \gamma_1)$.
We have $deg(p) \leq m + m_0+ m_1 -1$ and $deg(q) = m + \max( 2 m_0, 2m_1 + m)$ 
and calculating further, we get that 
$deg(q) - deg(p)  \geq  \frac{m +3}{2} $.

\item Let $h \in \qq[t]$ be an irreducible factor of $\alpha$.
 Write $\alpha = h\tilde{\alpha}$ with $ \tilde{\alpha} \in \qq[t]$.
If $h \nmid \gamma_0$ then $h^2 \nmid q$, whereby $h^2 \nmid g $.
If $h \mid \gamma_0$ then $h^2|q$, $h^3 \nmid q$ and $h|p$, whereby $h^2 \nmid g $. 
\end{enumerate}
\end{proof}

\section{Standard derivations}\label{SD}

Let $(k,\,')$ be a differential field and let $(Q,d)$ be  a differential quaternion algebra over $(k,\,')$. 
Recall that $d$ is a standard derivation on $Q$ if there exist $u,v \in Q^0$ such that $u^2,v^2\in \mg k, uv = -vu, d(u)\in k(u)$ and $d(v)\in k(v)$. 
For a differential field extension $(L,\,')$ over $(k,\,')$, we say that \textbf{$d$ becomes standard over $L$} if $d^{\ast} = d\otimes \,'$ is standard derivation on $Q\otimes_k L$.
A quaternion algebra with standard derivation is split by a finite field extension of degree at most $8$ (see \cite[Theorem 4.1]{AKVRS}), but the following example shows that the converse need not be true.

\begin{example}\label{examplealgebraic}
Let $(k,\,')$ be the differential field where $k =\qq(t)$ and $t' = 1$. 
Let  $Q=(1,t)_k$ with $u,v\in Q$ such that $u^2=1$, $v^2=t$ and $vu=-uv$.  
Consider the derivation $d=d_{(u,v)}+\partial_{-\frac{1}{8t}u}$ on $Q$. 
Then Equations (\ref{nonlineareq}) and (\ref{linearriccati}) will become $X' = \frac{1}{4t}X$ and $Y' = -\frac{1}{8t}Y$, respectively, and it follows  
from \Cref{splitequivalence} that a field extension $(L,\,')$ splits $(Q,d)$ if and only if $L$ contains solutions of these equations, that is, if and only if $L$ contains $k(\sqrt[8]{t})$. 
Thus, $[L:k]\geq 8$.

We now show that $d$ is not a standard derivation on $Q$.
Let $\tilde u = a_1 u +a_2v +a_3uv$ such that ${\tilde u}^2 \in \mg k$ and $d(\tilde u) \in k\tilde u$.
Then $d(\tilde u) \in k\tilde u$ implies that $ d(\tilde u) =\frac{N(\tilde u)'}{2N(\tilde u)} \tilde{u} $ and we get the following equations:
\begin{align}
\frac{N(\tilde u)'}{2N(\tilde u)} a_1  & = a_1' \label{standardeq1}\\  
 \frac{N(\tilde u)'}{2N(\tilde u)} a_2 &= a_2' + \frac{1}{2t}a_2  +\frac{a_3}{4t} \label{standardeq2}\\
\frac{N(\tilde u)'}{2N(\tilde u)} a_3   &=  a_3' + \frac{1}{2t}a_3 +\frac{a_2}{4t} \label{standardeq3}
\end{align}
Note that $a_2 =0$ if and only if $a_3 =0$.  
Suppose that $a_2,a_3 \in \mg k$.
From Equations (\ref{standardeq2}) and (\ref{standardeq3}), we obtain that $\frac{a_2}{a_3}$ is a solution of the Riccati Equation $ X' = \frac{1}{4t}(X^2 -1 )$. 
Note that $\pm 1$ are solutions of this equation and using separation of variables method, we observe that the general solution other than $\pm 1$ is $X = \frac{1+c\sqrt{t}}{1 -c\sqrt{t}}$ with $c' =0$.
Since $a_2, a_3 \in k$, we get that $\frac{a_2}{a_3} = \pm 1$.
Thus $N(\tilde u) =a_1^2 \neq 0$, whereby $a_1 \in \mg k$.
If $a_2 =a_3$, it follows from Equations (\ref{standardeq1}) and (\ref{standardeq2}) that  $\frac{a_1}{a_2}$ is a solution of the linear equation $Y' = \frac{3}{4t}Y$ and hence  $\frac{a_1}{a_2}$ is a constant multiple of $t^{\frac{3}{4}}$, which is a contradiction.
Similarly, if $a_2 = - a_3$, we get that  $\frac{a_1}{a_2}$ is a solution of $Y' = \frac{1}{4t}Y$, whereby  $\frac{a_1}{a_2}$ is a constant multiple of $t^{\frac{1}{4}}$, which is again a contradiction.
Thus $a_2 =a_3 =0$, and  $\tilde u \in ku$. 

Thus any $\tilde u \in Q^0$ such that $d(\tilde u) \in k\tilde u$ is an element of  $ku$.
Since any  pair of such elements cannot generate $Q$ over $k$, 
 $d$ is not a standard derivation on $Q$.
\end{example}

Now, we show that a  derivation on a quaternion algebra becomes standard over its differential splitting  fields.
\begin{proposition}\label{splittingimpliesstandard}
	Let $(Q,d)$ be a differential quaternion algebra over $(k,\,')$ and let $(L,\,')$ be a differential splitting field. Then $d$ becomes standard over $L$.	
\end{proposition}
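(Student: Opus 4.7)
The plan is to observe that the coordinate-wise derivation on $M_2(L)$ is itself a standard derivation in the sense of the paper, and then transport this property back across the differential isomorphism furnished by the splitting hypothesis.

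First, I would unwind the hypothesis. By definition, $L$ splitting $(Q,d)$ means there is an isomorphism of differential $L$-algebras
$$\phi\colon (Q\otimes_k L,\,d^{\ast})\longrightarrow (M_2(L),\,\,').$$
Thus it suffices to exhibit standard generators for the coordinate-wise derivation $'$ on $M_2(L)$, since $\phi^{-1}$ of such generators will be standard generators for $d^{\ast}$ on $Q\otimes_k L$.

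Second, I would take the explicit matrices
$$u_0=\begin{pmatrix}1 & 0\\ 0 & -1\end{pmatrix},\qquad v_0=\begin{pmatrix}0 & 1\\ 1 & 0\end{pmatrix}.$$
These lie in $M_2(L)^0$, satisfy $u_0^2=v_0^2=I\in \mg L$ and $v_0u_0=-u_0v_0$. Since every entry of $u_0$ and $v_0$ is a rational constant, the coordinate-wise derivative of each is the zero matrix, so
$$u_0'=0\in L(u_0),\qquad v_0'=0\in L(v_0).$$
Hence $'$ on $M_2(L)$ is a standard derivation with generators $u_0,v_0$.

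Finally, setting $u=\phi^{-1}(u_0)$ and $v=\phi^{-1}(v_0)$, the $L$-algebra part of $\phi$ transports the relations $u_0^2,v_0^2\in\mg L$ and $v_0u_0=-u_0v_0$ to the analogous relations for $u,v$ in $Q\otimes_k L$, and trace-zero-ness is preserved. Moreover, because $\phi\circ d^{\ast}=\,'\circ\phi$, we get $d^{\ast}(u)=\phi^{-1}(u_0')=0\in L(u)$ and similarly $d^{\ast}(v)=0\in L(v)$. Therefore $d^{\ast}$ is a standard derivation on $Q\otimes_k L$, which is exactly the statement that $d$ becomes standard over $L$. The proof is essentially conceptual: there is no genuine obstacle once one notices that the coordinate-wise derivation itself is a concrete instance of a standard derivation; the rest is transport of structure.
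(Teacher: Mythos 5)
Your proof is correct, and it is a leaner version of the paper's argument. Both proofs rest on the same idea of transporting standard generators across a differential isomorphism, but the routes differ: the paper first identifies $(Q\otimes_k L,d^{\ast})$ with $(M_2(L),d_P)$ for a trace-zero $P$, invokes Proposition \ref{JM_split} to get $F\in GL_2(L)$ with $F'=PF$ and Proposition \ref{splitequivalence} to get a solution $\theta\in L$ of \eqref{linearriccati}, and then conjugates the scaled matrices $\theta\,\mathrm{diag}(1,-1)$ and $\theta\left(\begin{smallmatrix}0&1\\1&0\end{smallmatrix}\right)$ by $F$ to produce generators $\tilde u,\tilde v$ with $d^{\ast}(\tilde u)=\frac{\theta'}{\theta}\tilde u$ and $d^{\ast}(\tilde v)=\frac{\theta'}{\theta}\tilde v$. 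You instead use the splitting hypothesis verbatim: the definition already hands you a differential $L$-algebra isomorphism $\phi\colon (Q\otimes_k L,d^{\ast})\to (M_2(L),\,')$, and you pull back the constant trace-zero matrices $u_0=\mathrm{diag}(1,-1)$, $v_0=\left(\begin{smallmatrix}0&1\\1&0\end{smallmatrix}\right)$, whose coordinate-wise derivatives vanish, getting generators with $d^{\ast}(u)=d^{\ast}(v)=0\in L(u)$, resp.\ $L(v)$. This is more economical (no $P$, no $F$, no auxiliary solution $\theta$ -- indeed the $\theta$-scaling in the paper is not needed for standardness, since $\theta=1$ would already do) and it even gives the slightly stronger conclusion that the standard generators can be taken $d^{\ast}$-constant; the paper's version, on the other hand, produces the generators explicitly inside the matrix model built up in Section 4, consistent with its earlier computations. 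The only points worth making explicit in your write-up are already essentially there: preimages of $u_0,v_0$ are pure quaternions because algebra isomorphisms preserve the reduced trace (or because they are non-central with square in $L$), and any two anticommuting pure quaternions with nonzero squares generate the quaternion algebra, so $d^{\ast}=d_{(u,v)}$ is indeed standard.
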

\begin{proof}
	 Since $(Q\otimes_k L,d^*)\simeq (M_2(L), d_P) $ for a  trace zero matrix $P$, we have $(M_2(L), d_P)\simeq (M_2(L),\,')$. 
By \Cref{JM_split}, there exists $F\in GL_2(L)$ such that $F'=PF$. 
By \Cref{splitequivalence},  $L$ contains a solution of \Cref{linearriccati}, say  $\theta$. 
Set $$U=F\left(\begin{matrix}
		\theta&0\\
		0&-\theta
	\end{matrix}\right)F^{-1} \mbox{and  } V=F\left(\begin{matrix}
		0&\theta\\
		\theta&0
	\end{matrix}\right)F^{-1}.$$ Then $U^2=V^2=\theta^2$, $VU=-UV$, $d_P(U)=\frac{\theta'}{\theta} U$ and $d_P(V)=\frac{\theta'}{\theta} V$. Let $\tilde u$ and $\tilde v$ be pre-images of $U$ and $V$ respectively, in $Q\otimes_k L$.  Then $\tilde u$ and $\tilde v$ are generators of $Q\otimes_k L$ with $d^*(\tilde{u})=\frac{\theta'}{\theta} \tilde{u}$ and  $d^*(\tilde{v})=\frac{\theta'}{\theta} \tilde{v}$. Thus, $(Q\otimes_k L,d^*)=(Q\otimes_k L,d_{(\tilde u,\tilde v)})$.	
\end{proof}

\begin{theorem}\label{standardequivalance}
	Let $(Q,d)$ be a differential quaternion algebra over $(k,\,')$.
	Then $d$ becomes standard over a finite extension of $k$ if and only if $(Q,d)$ is split by a finite extension of~$k$.	
\end{theorem}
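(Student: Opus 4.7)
The plan is to observe that both directions follow almost immediately from previously established results in the paper, so the work consists mainly in assembling the right pieces and checking compatibility of the derivation extensions.

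For the backward direction, suppose $(Q,d)$ is split by a finite extension $(L,\,')$ of $(k,\,')$. Then I would apply \Cref{splittingimpliesstandard} directly to $(L,\,')$: it guarantees that $d$ becomes standard over $L$, i.e.\ that $d^{\ast} = d\otimes \,'$ is a standard derivation on $Q\otimes_k L$. This direction requires no additional argument.

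For the forward direction, suppose there is a finite extension $(L,\,')$ of $(k,\,')$ over which $d^{\ast}$ is standard on $Q_L := Q\otimes_k L$. Then $(Q_L, d^{\ast})$ is a quaternion algebra over $L$ equipped with a standard derivation, so I would invoke the second assertion in \Cref{Quatex}\eqref{Quatex2}: a quaternion algebra with standard derivation is split by a finite differential field extension. Apply this to $(Q_L, d^{\ast})$ to produce a finite differential extension $(M,\,')$ of $(L,\,')$ that splits $(Q_L, d^{\ast})$. Since $[M:k] = [M:L][L:k] < \infty$ and $(Q\otimes_k M, d\otimes \,') \simeq ((Q\otimes_k L)\otimes_L M, d^{\ast}\otimes \,')$ as differential $M$-algebras, we conclude that $M$ is a finite extension of $k$ that splits $(Q,d)$.

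There is essentially no obstacle here; the only small point to be careful about is the bookkeeping for derivations under successive tensor products, namely that the derivation on $Q\otimes_k M$ induced by $'$ on $M$ coincides with the derivation on $(Q\otimes_k L)\otimes_L M$ induced by $'$ on $M$ after extending $'$ from $k$ to $L$ (which is automatic since $L/k$ is finite, so the derivation on $L$ extending $'$ is unique). Once this identification is in place, the two directions close the equivalence, and the proof is complete.
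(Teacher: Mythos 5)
Your proposal is correct and follows essentially the same route as the paper: the backward direction is \Cref{splittingimpliesstandard} applied to the finite splitting field, and the forward direction invokes \Cref{Quatex}(\ref{Quatex2}) for the quaternion algebra with standard derivation over the finite extension $L$, then descends to $k$ via the tower $M/L/k$. Your explicit bookkeeping of the base change $(Q\otimes_k L)\otimes_L M \simeq Q\otimes_k M$ and the uniqueness of the derivation extension on finite extensions is just a more careful spelling-out of what the paper leaves implicit.
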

\begin{proof}
	Suppose	 $d$ becomes standard over a finite extension of $k$, then using \Cref{Quatex}(\ref{Quatex2}) we get that $(Q,d)$ is split by a finite extension of $k$.
	
	Conversely, let $(Q,d)$ be split by a finite extension $(L,\,')$ of $(k,\,')$. Then by \Cref{splittingimpliesstandard}, $d$ becomes standard over $L$.
\end{proof}

\begin{remark}
	The class of differential quaternion algebras over $\qq(t)$ defined in \Cref{transdegone} provides examples of derivations that do not become standard over any finite extension of $\qq(t)$.
\end{remark}


%
%
\subsection*{Acknowledgement}
The authors wish to thank  Amit Kulshrestha and Varadharaj R.~Srinivasan for interesting discussions on this topic and raising \Cref{standardquestion}.

\bibliographystyle{amsalpha}

\end{document}